\begin{document}

\numberwithin{equation}{section}
\newtheorem{thm}[equation]{Theorem}
\newtheorem{pro}[equation]{Proposition}
\newtheorem{prob}[equation]{Problem}
\newtheorem{qu}[equation]{Question}
\newtheorem{cor}[equation]{Corollary}
\newtheorem{con}[equation]{Conjecture}
\newtheorem{lem}[equation]{Lemma}
\theoremstyle{definition}
\newtheorem{ex}[equation]{Example}
\newtheorem{defn}[equation]{Definition}
\newtheorem{ob}[equation]{Observation}
\newtheorem{rem}[equation]{Remark}

\hyphenation{homeo-morphism} 

\newcommand{\calA}{\mathcal{A}} 
\newcommand{\calD}{\mathcal{D}} 
\newcommand{\calE}{\mathcal{E}}
\newcommand{\calC}{\mathcal{C}} 
\newcommand{\Set}{\mathcal{S}et\,} 
\newcommand{\Top}{\mathcal{T}\!op \,}
\newcommand{\Topst}{\mathcal{T}\!op\, ^*}
\newcommand{\calK}{\mathcal{K}} 
\newcommand{\calO}{\mathcal{O}} 
\newcommand{\calS}{\mathcal{S}} 
\newcommand{\calT}{\mathcal{T}} 
\newcommand{\Z}{{\mathbb Z}}
\newcommand{\C}{{\mathbb C}}
\newcommand{\Q}{{\mathbb Q}}
\newcommand{\R}{{\mathbb R}}
\newcommand{\N}{{\mathbb N}}
\newcommand{\F}{{\mathcal F}}

\hfill

\title{The Hilali conjecture on product of spaces}  

\author{Shoji Yokura}
\thanks{2010 MSC: 55P62, 55Q40, 55N99.\\
Keywords: Hilali conjecture, rational homotopy theory.
}

\date{}

\address{Department of Mathematics  and Computer Science, Graduate School of Science and Engineering, Kagoshima University, 1-21-35 Korimoto, Kagoshima, 890-0065, Japan
}

\email{yokura@sci.kagoshima-u.ac.jp}

\maketitle 

\begin{abstract}  The Hilali conjecture claims that a simply connected rationally elliptic space $X$ satisfies the inequality $\dim (\pi_*(X)\otimes \Q ) \leqq \dim H_*(X;\Q )$. In this paper we show that for any such space $X$ there exists a positive integer $n_0$ such that for any $n \geqq n_0$ the \emph{strict inequality $\dim (\pi_*(X^n)\otimes \Q ) < \dim H_*(X^n;\Q )$} holds, where $X^{n}$ is the product of $n$ copies of $X$.

\end{abstract}

\section{Introduction}

The most important and fundamental topological invariant in geometry and topology is the Euler--Poincar\'e characteristic $\chi(X)$,
which is defined to be the alternating sum of the Betti numbers $\beta_i(X):=\dim H_i(X;\Q)$:
$$\chi(X):= \sum_{i \geqq 0} (-1)^i\beta_i(X),$$
provided that each $\beta_i(X)$ and $\chi(X)$ are both finite.
Similarly, for a topological space whose fundamental group is an Abelian group one can define the \emph{homotopical Betti number} $\beta^{\pi}_i(X):= \dim (\pi_i(X)\otimes \Q)$ where $i\geqq 1$ and the \emph{homotopical Euler--Poincar\'e characteristic}:
$$\chi^{\pi}(X):= \sum_{i \geqq 1}  (-1)^i\beta^{\pi}_i(X),$$
provided that each $\beta^{\pi}_i(X)$ and $\chi^{\pi}(X)$ are both finite.
The Euler--Poincar\'e characteristic is the special value of the Poincar\'e polynomial $P_X(t)$ at $t=-1$ and the homotopical Euler--Poincar\'e characteristic is the special value of the homotopical Poincar\'e polynomial $ P^{\pi}_X(t)$ at $t=-1$:
$$P_X(t):= \sum_{i \geqq 0}  t^i \beta_i(X), \quad \chi(X) = P_X(-1),$$
$$ P^{\pi}_X(t):= \sum_{i \geqq 1} t^i \beta^{\pi}_i(X), \quad \chi^{\pi}(X) = P^{\pi}_X(-1).$$

From now on any topological space is assumed to be path-connected, unless otherwise stated. The well-known Hilali conjecture \cite{Hil} claims that for a simply connected rationally elliptic space $X$ (i.e.., both $\dim (\pi_*(X)\otimes \Q ) < \infty$ and $\dim H_*(X;\Q ) < \infty$), then 
$$\dim (\pi_*(X)\otimes \Q ) \leqq \dim H_*(X;\Q ), \quad \text{namely}, \quad P^{\pi}_X(1) \leqq P_X(1).$$
Here 
$\pi_*(X)\otimes \Q := \sum_{i \geqq 1} \pi_i(X) \otimes \Q$  and  $H_*(X;\Q ):= \sum_{i \geqq 0} H_i(X;\Q)$. 
No counterexample to the Hilali conjecture has been so far found yet.

In this paper we show that for a simply connected rationally elliptic space $X$
there exists a positive integer $n_{0}$ such that for $\forall \, \, n \geqq n_{0}$
$$\dim (\pi_*(X^n)\otimes \Q ) < \dim H_*(X^{n};\Q ).$$
Here $X^n$ is the product $X^n =\underbrace{X \times \cdots \times X}_{n}$.

\begin{rem} In the Hilali conjecture a topological space $X$ is required to be simply connected, i.e., the fundamental group is trivial, $\pi_1(X)=0$. One of the reason of this requirement is that in general the fundamental group is not an Abelian group, thus one cannot define tensoring  $\pi_1(X) \otimes \mathbb Q$. In fact, you do need the condition of simply connectedness. The well-known counterexample is $S^1 \vee S^2$. Since $H_0(S^{1} \vee S^{2}) =H_1(S^{1} \vee S^{2})=H_1(S^{1} \vee S^{2}) =\mathbb Z$, hence $\dim H_*(S^{1} \vee S^{2}; \Q)=3$. However $\pi_{1}(S^{1}\vee S^{2})=\mathbb Z$ (thus $S^{1} \vee S^{2}$ is not simply connected) and $\pi_{2}(S^1 \vee S^{2})=\mathbb Z^{\infty}$. So $\dim \pi_{*}(X) \otimes \Q= \infty$. Hence $S^{1} \vee S^{2}$ is not rationally elliptic. So, a naive question is if the Hilali conjecture still holds for a rationally elliptic space whose fundamental group is an Abelian group.

\end{rem}
\section{Poincar\'e polynomial and homotopical Poincar\'e polynomial}

The Poincar\'e polynomial $P_X(t)$ is \emph{multiplicative} in the following sense:
$$P_{X \times Y}(t) = P_X(t) \times P_Y(t),$$
which follows from the K\"unneth Formula:
$$H_n(X \times Y;\Q)= \sum_{i+j=n} H_i(X; \Q) \otimes H_j(Y;\Q).$$
The homotopical Poincar\'e polynomial $P^{\pi}_X(t)$ is \emph{additive} in the following sense:
$$P^{\pi}_{X \times Y}(t) = P^{\pi}_X(t) + P^{\pi}_Y(t),$$
which follows from 
$$\pi_i(X \times Y) = \pi_i (X) \times \pi_i(Y) =\pi_i(X) \oplus \pi_i(X) \quad
\text{and} \quad  (A \oplus B)\otimes \Q = (A \otimes \Q ) \oplus (B \otimes \Q).$$

Here, for a later use, we compute the Poincar\'e polynomial and homotopical Poincar\'e polynomial of spheres.
The following are well-known results (due to Serre Finiteness Theorem \cite{Se0, Se}):
$$\pi_i(S^{2k}) \otimes \Q 
=\begin{cases} 
\Q & \, i=2k\\
 \Q & \, i=4k-1\\
\, 0  & \,  i\not =2k, 4k-1
\end{cases}  \qquad 
\pi_i(S^{2k+1}) \otimes \Q 
=\begin{cases} 
 \Q & \,  i=2k+1\\
\, 0 & \, i \not =2k+1
\end{cases} 
$$
Hence we have that 
\begin{equation}\label{equ1}
P^{\pi}_{S^{2k+1}}(t) = t^{2k+1} \, \text{ and} \, P_{S^{2k+1}}(t) = t^{2k+1} +1.
\end{equation}
Thus we have
$$P^{\pi}_{S^{2k+1}}(t) < P_{S^{2k+1}}(t) \, \ \text{for} \, \,   \forall t.$$
However we have 
\begin{equation}\label{equ2}
P^{\pi}_{S^{2k}}(t) = t^{4k-1} + t^{2k} \, \text{and} \, P_{S^{2k}}(t) = t^{2k} +1.
\end{equation}
 Hence we have
$$\begin{cases}
P^{\pi}_{S^{2k}}(t) < P_{S^{2k}}(t), & \, t <1 \\
P^{\pi}_{S^{2k}}(t) = P_{S^{2k}}(t), & \,t=1 \\
P^{\pi}_{S^{2k}}(t) > P_{S^{2k}}(t), & \, t >1 \\
\end{cases}
$$

\section{The Hilali conjecture on products of spheres}

It is known that the Hilali conjecture holds for products of spheres. In this section, first we show the following more general statement,
using the multiplicativity of the Poincar\'e polynomial and the additivity of the homotopical Poincar\'e polynomial observed above.

First we observe that for a pathconnected space $X$ $\dim H_{*}(X; \Q) =1$ if and only if $X$ is rationally homotopy equivalent to a point, which is due to the fundamental fact that $X \sim_{\Q} Y$ if and only if the Sullivan's minimal models $M_{X}$ and $M_{Y}$ are isomorphic. Another simpler argument is using the well-known Whitehead--Serre Theorem \cite[Theorem 8.6]{FHT}. Indeed, $\dim H_{*}(X; \Q) =1$ for a pathconnected space X is equivalent to $(a_X)_*:H_*(X;\Q) \to 
H_*(pt)=\Q$ being an isomorphism, where $a_X:X \to pt$ is the map to a point. Thus it follows from the Whitehead--Serre Theorem that $(a_X)_*:\pi_*(X)\otimes \Q \to \pi_*(pt)\otimes \Q =0$ is an isomorphism. Therefore we get the following strict inequality
\begin{equation}\label{equ0}
0 = \dim (\pi_{*}(X) \otimes \Q) < \dim H_{*}(X; \Q)=1, \quad \text{namely} \quad 0=P^{\pi}_{X}(1) < P_{X}(1) =1
\end{equation}

\begin{pro} \label{pro1} Let $X_i \, (1 \leqq i \leqq n)$ be a rationally elliptic space such that the fundamental group is an Abelian group, then $X_1 \times \cdots \times X_n$ is also rationally elliptic, and if $P^{\pi}_{X_i}(1) \leqq P_{X_i}(1)$, then $P^{\pi}_{X_1 \times \cdots \times X_n}(1) \leqq P_{X_1 \times \cdots \times X_n}(1)$.
\end{pro}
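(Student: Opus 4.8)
The plan is to prove the statement by induction on $n$, the case $n=1$ being nothing but the hypothesis $P^{\pi}_{X_i}(1)\leqq P_{X_i}(1)$. First I would dispose of the assertion that the product is again rationally elliptic with Abelian fundamental group: a finite product of Abelian groups is Abelian and $\pi_1(X_1\times\cdots\times X_n)=\pi_1(X_1)\times\cdots\times\pi_1(X_n)$, while the multiplicativity of $P_X$ (the K\"unneth formula) gives $\dim H_*(X_1\times\cdots\times X_n;\Q)=\prod_i\dim H_*(X_i;\Q)<\infty$ and the additivity of $P^{\pi}_X$ gives $\dim(\pi_*(X_1\times\cdots\times X_n)\otimes\Q)=\sum_i\dim(\pi_*(X_i)\otimes\Q)<\infty$. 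Since $X_1\times\cdots\times X_n=(X_1\times\cdots\times X_{n-1})\times X_n$, the inductive step reduces everything to the case of two factors, say $X$ and $Y$.

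For two factors, applying the multiplicativity of $P_X$ and the additivity of $P^{\pi}_X$ turns the desired inequality $P^{\pi}_{X\times Y}(1)\leqq P_{X\times Y}(1)$ into the elementary numerical inequality
\[
P^{\pi}_X(1)+P^{\pi}_Y(1)\ \leqq\ P_X(1)\cdot P_Y(1).
\]
Set $a=P^{\pi}_X(1)$, $p=P_X(1)$, $b=P^{\pi}_Y(1)$, $q=P_Y(1)$. These are non-negative integers; moreover $p,q\geqq 1$ because each Poincar\'e polynomial evaluated at $1$ is a sum of Betti numbers that includes $\beta_0=1$ (path-connectedness), and the hypothesis says $a\leqq p$ and $b\leqq q$. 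If $p\geqq 2$ and $q\geqq 2$, then $pq-(p+q)=(p-1)(q-1)-1\geqq 0$, whence $a+b\leqq p+q\leqq pq$ and we are done.

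The hard part --- really the only case requiring care --- is when one factor, say $X$, has $p=P_X(1)=1$, i.e. $\dim H_*(X;\Q)=1$; there the crude bound $a+b\leqq p+q$ is useless, since $p+q=1+q$ can exceed $pq=q$. The point is that in this case the Hilali inequality must be upgraded to the strict statement \eqref{equ0}: $\dim H_*(X;\Q)=1$ forces $X$ to be rationally homotopy equivalent to a point (Whitehead--Serre), so $a=P^{\pi}_X(1)=0$, and then $a+b=b\leqq q=pq$; the case $q=1$ is symmetric. Feeding the two-factor case back into the induction completes the proof, and the moral I would emphasize is that the argument rests on the fact that a rationally trivial space has vanishing rational homotopy, not merely on the conjectured inequality itself.
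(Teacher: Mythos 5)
Your proof is correct and follows essentially the same route as the paper: both arguments rest on the observation (\ref{equ0}) that $\dim H_*(X;\Q)=1$ forces $P^{\pi}_X(1)=0$ via Whitehead--Serre, together with the elementary computation $b_1b_2-(b_1+b_2)=(b_1-1)(b_2-1)-1\geqq 0$ for $b_i\geqq 2$, assembled by induction. The only cosmetic difference is that the paper strips out all rationally trivial factors at the outset and then invokes a standalone lemma for the case where every $P_{X_i}(1)\geqq 2$, whereas you handle the degenerate factor inside each inductive step.
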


The above observation (\ref{equ0}) implies that if $X_i$ ($1 \leqq i \leqq k$) satisfies $P_{X_{i}}(1)=\dim H_{*}(X_{i};\Q) \geqq 2$ and $X_j$ ($k+1 \leqq j \leqq n$) satisfies $P_{X_{j}}(1)=\dim H_{*}(X_{j};\Q) =1$, thus $P^{\pi}_{X_{j}}(1)=0$, then we have
$$P^{\pi}_{X_1 \times \cdots \times X_n}(1) = P^{\pi}_{X_1 \times \cdots \times X_k}(1)+ P^{\pi}_{X_{k+1}}(1) + \cdots +P^{\pi}_{X_n}(1)= P^{\pi}_{X_1 \times \cdots \times X_k}(1).$$
$$P_{X_1 \times \cdots \times X_n}(1) = P_{X_1 \times \cdots \times X_k}(1) \times P_{X_{k+1}}(1) \times \cdots \times P_{X_n}(1)= P_{X_1 \times \cdots \times X_k}(1).$$
(Namely, taking the product by a space $Z$ satisfying $\dim H_{*}(Z; \Q)=1$ does not change the value of $P^{\pi}_{\bullet}(1)$ and $P_{\bullet}(1)$.)
Therefore, to prove the proposition, we can assume that for each $X_i$ we have $P_{X_{i}}(1) \geqq 2$.
Since $P^{\pi}_{X_1 \times \cdots \times X_n}(1)=P^{\pi}_{X_1}(1) + \cdots +P^{\pi}_{X_n}(1)$ and  $P_{X_1 \times \cdots \times X_n}(1)= P_{X_1}(1)\cdots P_{X_n}(1)$, this proposition follows from the following elementary lemma.
\begin{lem} Let $a_i, b_i \, (1 \leqq i \leqq n)$ be real numbers such that $a_i \leqq b_i$ and $2 \leqq b_i$ for each $i$. Then we have
$$a_1 + a_2  \cdots + a_n \leqq b_1b_2 \cdots b_n.$$
In particular, we have 
$$b_1 + b_2 + \cdots +b_n \leqq b_1b_2 \cdots b_n.$$
\end{lem}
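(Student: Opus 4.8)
The plan is first to reduce to the ``in particular'' statement and then to prove that by induction on $n$. Since $a_i \leqq b_i$ for every $i$, we immediately get $a_1 + a_2 + \cdots + a_n \leqq b_1 + b_2 + \cdots + b_n$, so it suffices to establish the inequality $b_1 + b_2 + \cdots + b_n \leqq b_1 b_2 \cdots b_n$ under the hypothesis $b_i \geqq 2$ for all $i$.

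For the inductive proof, the base case $n = 1$ is the triviality $b_1 \leqq b_1$. For the inductive step, suppose $b_1 + \cdots + b_n \leqq P$, where $P := b_1 b_2 \cdots b_n$. Adding $b_{n+1}$ to both sides reduces the claim to $P + b_{n+1} \leqq P \cdot b_{n+1}$, which rearranges to $(P - 1)(b_{n+1} - 1) \geqq 1$. Since $P = b_1 b_2 \cdots b_n \geqq 2^n \geqq 2$ we have $P - 1 \geqq 1$, and since $b_{n+1} \geqq 2$ we have $b_{n+1} - 1 \geqq 1$; multiplying these two inequalities gives $(P-1)(b_{n+1}-1) \geqq 1$, which completes the induction.

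The argument is entirely elementary and I do not anticipate any genuine obstacle; the only point requiring a moment's care is to track that the partial product $P$ stays $\geqq 2$, so that the factor $P - 1$ is $\geqq 1$ — this is exactly why the hypothesis $b_i \geqq 2$ (rather than merely $b_i > 1$) is used. As an alternative to induction, one can instead divide through by $\prod_{i} b_i$ and note that $\sum_{i=1}^{n} \bigl( \prod_{j \neq i} b_j \bigr)^{-1} \leqq n / 2^{n-1} \leqq 1$, the last inequality holding for every $n \geqq 1$.
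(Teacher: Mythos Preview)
Your proof is correct and follows essentially the same route as the paper's: both arguments proceed by induction, with the key step being the factorization $P\cdot b - (P + b) = (P-1)(b-1) - 1 \geqq 0$ when $P, b \geqq 2$. The only cosmetic differences are that you first reduce to the $b$-only inequality and start the induction at $n=1$, whereas the paper carries the $a_i$'s along and starts at $n=2$; the substance is identical.
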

\begin{proof} A proof is easy, but for the sake of completeness we write a proof.
Let $n=2$. Since $a_1 \leqq b_1$ and $a_2 \leqq b_2$, we have $a_1 + a_2  \leqq b_1+ b_2$.
\begin{align*}
b_1b_2 -(b_1+b_2) &= (b_1b_2 -b_1-b_2+1) -1 \\
& =(b_1 -1)(b_2-1) -1 \\
& \geqq 1\cdot 1-1 =0 \quad (\text{because $b_i \geqq 2$, thus $b_i -1 \geqq 1$})
\end{align*}
Hence $b_1 +b_2 \leqq b_1 b_2.$ Therefore we have $a_1 +a_2 \leqq b_1b_2.$ Now, suppose that we have
$a_1+a_2+\cdots +a_{n-1} \leqq b_1b_2 \cdots b_{n-1}.$ Since $2 \leqq b_1b_2 \cdots b_{n-1}$, by applying $a_1+a_2 \leqq b_1b_2$ to 
the inequalities $a_1+a_2+\cdots +a_{n-1} \leqq b_1b_2 \cdots b_{n-1}$ and $a_n \leqq b_n$ we get
$$(a_1+a_2+ \cdots +a_{n-1})+a_n \leqq (b_1b_2 \cdots b_{n-1})b_n,$$
namely we get $a_1 + a_2  \cdots + a_n \leqq b_1b_2 \cdots b_n$.
\end{proof}

As an application of the above proposition we can see that the Hilali conjecture holds for the product of a finite family of spheres of dimension $\geqq 2$. (Note that we need that the dimension of each sphere is $\geqq 2$ because in the Hilali conjecture a space has to be simply connected.) It follows from (\ref{equ1}) and (\ref{equ2}) above that for any sphere $S^n$ of any dimension $n$ we have
$$P^{\pi}_{S^n}(1) \leqq P_{S^n}(1) \quad  \text{and} \quad  P_{S^n}(1)=2.$$
Therefore the following corollary follows from Proposition \ref{pro1}:
\begin{cor} The Hilali conjecture holds for the product $S^{n_1} \times S^{n_2} \times \cdots \times S^{n_r}$ of any finite family of spheres $S^{n_i}$ of any dimension $\geqq 2$:
$$P^{\pi}_{S^{n_1} \times S^{n_2} \times \cdots \times S^{n_r}}(1) \leqq  P_{S^{n_1} \times S^{n_2} \times \cdots \times S^{n_r}}(1).$$
Hence the Hilali conjecture also holds for any topological space $S$ which is homotopy equivalent to such a product of spheres:
$$P^{\pi}_{S}(1) \leqq P_{S}(1).$$
\end{cor}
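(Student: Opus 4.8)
The plan is to deduce the corollary directly from Proposition \ref{pro1}; the only work is to check that the hypotheses of that proposition are met by spheres and to record the numerical facts established in Section 2.

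First I would observe that a sphere $S^{n}$ with $n \geqq 2$ is simply connected, so its fundamental group, being trivial, is in particular Abelian; and it is rationally elliptic, since the Serre Finiteness Theorem recalled above gives $\dim (\pi_{*}(S^{n}) \otimes \Q) < \infty$ while $\dim H_{*}(S^{n}; \Q) = 2 < \infty$. Hence each factor $X_{i} := S^{n_{i}}$ is a legitimate input to Proposition \ref{pro1}, and that proposition already tells us the product $S^{n_{1}} \times \cdots \times S^{n_{r}}$ is again rationally elliptic.

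Next I would read off from (\ref{equ1}) and (\ref{equ2}) that $P^{\pi}_{S^{n}}(1) \leqq P_{S^{n}}(1)$ holds for every $n \geqq 2$: in the odd case $P^{\pi}_{S^{2k+1}}(1) = 1 < 2 = P_{S^{2k+1}}(1)$, and in the even case $P^{\pi}_{S^{2k}}(1) = 2 = P_{S^{2k}}(1)$; in both cases $P_{S^{n}}(1) = 2$. Thus the hypothesis $P^{\pi}_{X_{i}}(1) \leqq P_{X_{i}}(1)$ of Proposition \ref{pro1} is satisfied for every factor, and applying that proposition to $X_{1} \times \cdots \times X_{r} = S^{n_{1}} \times \cdots \times S^{n_{r}}$ yields the asserted inequality $P^{\pi}_{S^{n_{1}} \times \cdots \times S^{n_{r}}}(1) \leqq P_{S^{n_{1}} \times \cdots \times S^{n_{r}}}(1)$. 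For the last sentence I would invoke the homotopy invariance of both sides: if $S$ is homotopy equivalent to $S^{n_{1}} \times \cdots \times S^{n_{r}}$, then $H_{i}(S; \Q) \cong H_{i}(S^{n_{1}} \times \cdots \times S^{n_{r}}; \Q)$ and $\pi_{i}(S) \cong \pi_{i}(S^{n_{1}} \times \cdots \times S^{n_{r}})$ for all $i$, so $P_{S} = P_{S^{n_{1}} \times \cdots \times S^{n_{r}}}$ and $P^{\pi}_{S} = P^{\pi}_{S^{n_{1}} \times \cdots \times S^{n_{r}}}$ as polynomials, and the inequality at $t = 1$ transfers verbatim.

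Since every step is either a citation of the Serre computation or an application of an already-proved statement, there is no genuine obstacle here. The only point that deserves a moment's attention is the even-dimensional case, where one has the \emph{equality} $P^{\pi}_{S^{2k}}(1) = P_{S^{2k}}(1)$ rather than a strict inequality; this is harmless, because Proposition \ref{pro1} requires only $\leqq$, and indeed the elementary lemma underlying it allows $a_{i} = b_{i} = 2$.
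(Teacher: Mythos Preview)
Your proposal is correct and follows essentially the same route as the paper: verify from (\ref{equ1}) and (\ref{equ2}) that each sphere $S^{n}$ satisfies $P^{\pi}_{S^{n}}(1) \leqq P_{S^{n}}(1)$ (with $P_{S^{n}}(1)=2$), and then apply Proposition~\ref{pro1}. Your explicit checks that spheres meet the hypotheses of the proposition and your spelled-out homotopy-invariance argument for the final sentence are minor elaborations of points the paper leaves implicit.
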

\begin{rem} It is clear that the above inequalities hold even if we do not require the dimension of each sphere to be $\geqq 2$.
\end{rem}

We can get the following corollary from the above Proposition \ref{pro1} and the above remark:
\begin{cor}\label{cor1} Let $X$ be a rationally elliptic space such that its fundamental group is an Abelian group. Then $X^n:=X \times X \times \cdots \times X$ is also rationally elliptic, and if $P^{\pi}_X(1) \leqq P_X(1)$, then $P^{\pi}_{X^n}(1) \leqq P_{X^n}(1)$.
\end{cor}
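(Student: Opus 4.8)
The plan is to obtain this as the diagonal case of Proposition~\ref{pro1}. Setting $X_1 = X_2 = \cdots = X_n = X$, the hypothesis that $X$ is rationally elliptic with abelian fundamental group is exactly the hypothesis of Proposition~\ref{pro1}, so that proposition immediately gives that $X^n = X \times \cdots \times X$ is rationally elliptic. Likewise, the single inequality $P^{\pi}_X(1) \leqq P_X(1)$ is precisely the list of inequalities $P^{\pi}_{X_i}(1) \leqq P_{X_i}(1)$ $(1 \leqq i \leqq n)$ appearing in the proposition, so the conclusion $P^{\pi}_{X^n}(1) \leqq P_{X^n}(1)$ follows at once.

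If one prefers a self-contained argument, I would unwind the two formulas of Section~2: additivity of $P^{\pi}_{\bullet}$ gives $P^{\pi}_{X^n}(1) = n\,P^{\pi}_X(1)$, and multiplicativity of $P_{\bullet}$ gives $P_{X^n}(1) = P_X(1)^n$. Hence the claim is the numerical inequality $n\,P^{\pi}_X(1) \leqq P_X(1)^n$. If $\dim H_*(X;\Q) = P_X(1) \geqq 2$, this is the elementary Lemma applied with $a_i = P^{\pi}_X(1)$ and $b_i = P_X(1)$ for every $i$. If instead $P_X(1) = 1$, then (\ref{equ0}) forces $P^{\pi}_X(1) = 0$, and the inequality $0 \leqq 1$ is trivial.

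There is no real obstacle here; the only point worth noting is that, unlike the Hilali conjecture itself, neither Proposition~\ref{pro1} nor this corollary requires simple connectivity. The abelian fundamental group hypothesis, which is needed merely so that $\pi_1 \otimes \Q$ makes sense, is inherited automatically by the product: $\pi_1(X^n) = \pi_1(X) \times \cdots \times \pi_1(X)$ is abelian whenever $\pi_1(X)$ is, which is exactly the content alluded to by the remark preceding the statement.
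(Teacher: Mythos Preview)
Your proposal is correct and matches the paper's approach exactly: the paper simply states that this corollary follows from Proposition~\ref{pro1} (and the preceding remark), which is precisely your diagonal specialization $X_1 = \cdots = X_n = X$. Your second paragraph, unwinding the additivity/multiplicativity directly and handling the $P_X(1)=1$ case via (\ref{equ0}), is more detail than the paper supplies but is entirely in the same spirit.
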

\section{Hilali conjecture ``modulo product"}

Motivated by the above Corollary \ref{cor1}, from the multiplicativity of the Poincar\'e polynomial $P_X(1)$ and the additivity of homotopical Poincar\'e polynomial $P^{\pi}_X(1)$ we can get the following theorem from an elementary calculus fact:
\begin{thm}[\emph{Hilali conjecture ``modulo product"}] Let $X$ be a  
rationally elliptic space such that its fundamental group is an Abelian group. Then there exists some integer $n_0$ such that for $\forall \, \, n\geqq n_0$ the following strict inequality holds:
$$\dim (\pi_*(X^n)\otimes \Q ) < \dim H_*(X^{n};\Q ), i.e., P^{\pi}_{X^n}(1) < P_{X^n}(1).$$
\end{thm}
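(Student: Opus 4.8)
The plan is to reduce the statement to a one-variable inequality. Write $a := P^{\pi}_X(1) = \dim(\pi_*(X)\otimes\Q)$ and $b := P_X(1) = \dim H_*(X;\Q)$, both finite positive integers since $X$ is rationally elliptic. By the additivity of the homotopical Poincar\'e polynomial and the multiplicativity of the ordinary Poincar\'e polynomial established in Section 2, we have $P^{\pi}_{X^n}(1) = n a$ and $P_{X^n}(1) = b^n$. So the desired strict inequality $P^{\pi}_{X^n}(1) < P_{X^n}(1)$ becomes simply $na < b^n$.

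Next I would dispose of the degenerate case. If $\dim H_*(X;\Q) = 1$, i.e. $b = 1$, then by the observation \eqref{equ0} (Whitehead--Serre) we have $a = 0$, so $na = 0 < 1 = b^n$ for every $n \geqq 1$, and we may take $n_0 = 1$. Hence we may assume $b \geqq 2$. In that case the function $f(x) = b^x - a x$ on $[1,\infty)$ has derivative $f'(x) = b^x \ln b - a$, which is positive for all large $x$ since $b^x \to \infty$; more to the point, the elementary calculus fact that exponential growth dominates linear growth gives $\lim_{n\to\infty} \frac{b^n}{n} = \infty$ (for instance because $b^n \geqq 2^n > \binom{n}{2} = \frac{n(n-1)}{2}$ for $n \geqq 1$, whence $b^n/n \geqq (n-1)/2 \to \infty$). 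Therefore there is a positive integer $n_0$ with $b^n/n > a$, i.e. $na < b^n$, for all $n \geqq n_0$; concretely one can take $n_0$ to be any integer exceeding $2a+1$.

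Finally I would assemble these pieces: for $n \geqq n_0$ we have $P^{\pi}_{X^n}(1) = na < b^n = P_{X^n}(1)$, which is exactly the asserted strict inequality $\dim(\pi_*(X^n)\otimes\Q) < \dim H_*(X^n;\Q)$. (Rational ellipticity of $X^n$ is already recorded in Corollary \ref{cor1}.) There is no real obstacle here: the only thing to be a little careful about is the case $b = 1$, where the naive bound $b^n/n \to \infty$ fails and one must instead invoke \eqref{equ0} to get $a = 0$; everything else is the bookkeeping of translating the topological statement into the inequality $na < b^n$ and then a one-line appeal to exponential-beats-linear.
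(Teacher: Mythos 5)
Your proof is correct and follows essentially the same route as the paper: both reduce to the inequality $nP^{\pi}_X(1) < (P_X(1))^n$ via additivity/multiplicativity, dispose of the case $P_X(1)=1$ by the Whitehead--Serre observation, and otherwise invoke the fact that exponential growth beats linear growth (the paper phrases it as $\lim_{n\to\infty} n r^n = 0$ for $|r|<1$, you as $b^n/n \to \infty$). Your explicit bound $n_0 > 2P^{\pi}_X(1)+1$ is a small bonus not present in the paper.
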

\begin{proof} 
Since $X$ is 
rationally elliptic, for any integer $X^n$ is also 
rationally elliptic.

If $P_{X}(1)=1$, then it follows from (\ref{equ0}) that $P^{\pi}_{X}(1)=0$, hence for any integer $n \geqq 1$ we have 
$$0= P^{\pi}_{X^n}(1) < P_{X^n}(1) = 1.$$
So, suppose that $P_{X}(1) \geqq 2$.
The multiplicativity of the Poincar\'e polynomial $P_X(1)$ and the additivity of homotopical Poincar\'e polynomial $P^{\pi}_X(1)$ 
imply the following
$$\frac{P^{\pi}_{X^n}(1)}{P_{X^n}(1)} = \frac{nP^{\pi}_X(1)}{(P_X(1))^n}.$$
Since $P_X(1) \geqq 2$, $\frac{1}{P_X(1)} <1$. Thus it follows from the elementary calculus\footnote{If $|r| <1$, we have $\displaystyle \lim_{n \to \infty} nr^n=0$.} that
$$\lim_{n \to \infty} n \Bigl (\frac{1}{P_X(1)} \Bigr)^n = 0.$$
Therefore, whatever the value $P^{\pi}_X(1)$ is, we obtain
$$\lim_{n \to \infty} n P^{\pi}_X(1)\Bigl (\frac{1}{P_X(1)} \Bigr)^n = \lim_{n \to \infty} \frac{nP^{\pi}_X(1)}{(P_X(1))^n} = 0.$$
Hence there exists some integer $n_0$ such that for all $n \geqq n_0$
$$\frac{P^{\pi}_{X^n}(1)}{P_{X^n}(1)}  = \frac{nP^{\pi}_X(1)}{(P_X(1))^n} < 1.$$
Therefore there exists some integer $n_0$ such that for all $n \geqq n_0$
$$P^{\pi}_{X^n}(1) < P_{X^n}(1).$$
\end{proof}
\begin{defn} Let $X$ be a rationally elliptic space such that its fundamental group is an Abelian group. 
\begin{enumerate}
\item The minimum integer $n_0$ such that
$$P^{\pi}_{X^{n_0}}(1) < P_{X^{n_0}}(1)$$
shall be called the \emph{``homology-rank-dominating" power} of $X$ and denoted by $\frak p_0(X)$.
\item The minimum integer $n_0$ such that
$$P^{\pi}_{X^{n_0}}(1) \leqq P_{X^{n_0}}(1)$$
shall be called the \emph{``homology-rank-almost-dominating" power} of $X$ and denoted by $\frak p(X)$.
\end{enumerate}
\end{defn}
\begin{rem} 
Since $P^{\pi}_{X^{n_0}}(1)$ and $P_{X^{n_0}}(1)$ are both homotopy invariant, the powers   $\frak p_0(X)$ and  $\frak p(X)$ are both homotopy invariants
\end{rem}
\begin{rem} 
The Hilali conjecture claims that the ``homology-rank-almost-dominating" power $\frak p(X)$ is always $1$.
\end{rem}
\begin{ex} Let $n$ be any positive integer. Since $P^{\pi}_{S^{2n}}(1) = 2$ and $P_{S^{2n}}(1) = 2$, we have
$$P^{\pi}_{S^{2n} \times S^{2n}}(1) =2+2=4, P_{S^{2n} \times S^{2n}}(1) = 2\times 2 =4,$$
$$P^{\pi}_{S^{2n} \times S^{2n} \times S^{2n}}(1) =2 +2 +2 =6, P_{S^{2n} \times S^{2n}\times S^{2n}}(1)=2 \times 2 \times 2 =8.$$
Hence we have 
$$\frak p_0(S^{2n})=3 \quad \text{and} \quad \frak p(S^{2n})=1.$$
\end{ex}
\begin{ex} Let $n, m$ be any positive integers. Since $P^{\pi}_{S^{2n} \times S^{2m}}(1) = 2+2=4$ and $P_{S^{2n} \times S^{2m}}(1) = 2+2=4$, we have
$$P^{\pi}_{(S^{2n} \times S^{2m}) \times (S^{2n} \times S^{2m}) }(1) = 4 +4 =8, P_{(S^{2n} \times S^{2m}) \times (S^{2n} \times S^{2m}) }(1) =4 \times 4 =16.$$
Hence we have 
$$\frak p_0(S^{2n} \times S^{2m})=2 \quad \text{and} \quad \frak p(S^{2n} \times S^{2m})=1.$$
\end{ex}
\begin{rem} If the Hilali conjecture is correct, then we have the following:
\begin{enumerate}
\item if $\dim (\pi_*(X) \otimes \Q) <\dim H_{*}(X;\Q)$, then $\frak p_0(X)=1.$
\item if $\dim (\pi_*(X) \otimes \Q) =\dim H_{*}(X;\Q)=2$, then $\frak p_0(X)=3.$
\item if $\dim H_{*}(X;\Q) \geqq 3$ and $\dim (\pi_*(X) \otimes \Q) =\dim H_{*}(X;\Q)$, then $\frak p_0(X)=2.$
\end{enumerate}
\end{rem}
\section{A final remark}
If we plug in $t=-1$ in the two equations (\ref{equ1}) and (\ref{equ2}) in \S2, we get the following
\begin{equation}\label{odd sphere}
P^{\pi}_{S^{2k+1}}(-1) = -1 <  P_{S^{2k+1}}(-1) = 0, 
\end{equation}
\begin{equation}\label{even sphere}
P^{\pi}_{S^{2k}}(-1) = 0 <  P_{S^{2k}}(-1) = 2.
\end{equation}
Therefore, for the product $S^{n_1} \times S^{n_2} \times \cdots \times S^{n_r}$ of any finite family of spheres $S^{n_i}$ we have 
\begin{equation}
P^{\pi}_{S^{n_1} \times S^{n_2} \times \cdots \times S^{n_r}}(-1)  \leqq 0 \quad \text{and} \quad  0\leqq P_{S^{n_1} \times S^{n_2} \times \cdots \times S^{n_r}}(-1).
\end{equation}
Thus we get the following :
\begin{cor}\label{-1} For the product $S^{n_1} \times S^{n_2} \times \cdots \times S^{n_r}$ of any finite  family of spheres $S^{n_i}$ we have
$$P^{\pi}_{S^{n_1} \times S^{n_2} \times \cdots \times S^{n_r}}(-1) \leqq  P_{S^{n_1} \times S^{n_2} \times \cdots \times S^{n_r}}(-1).$$
Hence for any topological space $S$ which is homotopy equivalent to such a product of spheres, we have
$$P^{\pi}_{S}(-1)\leqq P_{S}(-1).$$
\end{cor}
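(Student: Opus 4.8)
The plan is to evaluate both sides directly, reducing the product to its sphere factors via the additivity of $P^{\pi}_{\bullet}$ and the multiplicativity of $P_{\bullet}$ recorded in \S2, and then to invoke the values (\ref{odd sphere}) and (\ref{even sphere}) at $t=-1$.

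First I would partition the factors $S^{n_1}, \dots , S^{n_r}$ according to parity of dimension: say $a$ of them are odd-dimensional and $b$ of them are even-dimensional, so $a+b=r$. Additivity of the homotopical Poincar\'e polynomial gives
$$P^{\pi}_{S^{n_1} \times \cdots \times S^{n_r}}(-1) = \sum_{i=1}^{r} P^{\pi}_{S^{n_i}}(-1) = a\cdot(-1) + b\cdot 0 = -a \leqq 0$$
by (\ref{odd sphere}) and (\ref{even sphere}). Multiplicativity of the Poincar\'e polynomial gives $P_{S^{n_1} \times \cdots \times S^{n_r}}(-1) = \prod_{i=1}^{r} P_{S^{n_i}}(-1)$, and here I would split into two cases. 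If $a \geqq 1$, then at least one factor equals $P_{S^{2k+1}}(-1)=0$, so the product is $0$. If $a=0$, then every factor equals $P_{S^{2k}}(-1)=2$, so the product equals $2^{b} \geqq 1 > 0$. Either way,
$$0 \leqq P_{S^{n_1} \times \cdots \times S^{n_r}}(-1).$$
Stringing the two inequalities together yields $P^{\pi}_{S^{n_1} \times \cdots \times S^{n_r}}(-1) \leqq 0 \leqq P_{S^{n_1} \times \cdots \times S^{n_r}}(-1)$, which is exactly the two-sided estimate already displayed just above the corollary, and in particular the desired inequality.

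For the second assertion I would use that $P^{\pi}_{\bullet}(-1)$ and $P_{\bullet}(-1)$ depend only on the rational homotopy groups and on the rational homology groups, hence are homotopy invariants. So if $S$ is homotopy equivalent to a product $S^{n_1} \times \cdots \times S^{n_r}$ of spheres, then $P^{\pi}_{S}(-1) = P^{\pi}_{S^{n_1} \times \cdots \times S^{n_r}}(-1)$ and $P_{S}(-1) = P_{S^{n_1} \times \cdots \times S^{n_r}}(-1)$, and the inequality for $S$ is inherited from the one just proved.

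The argument is entirely elementary and I do not expect a genuine obstacle. The only place that calls for a moment of attention is the evaluation of $\prod_{i} P_{S^{n_i}}(-1)$: a single odd-dimensional sphere among the factors already forces this product to vanish, so one should not expect a uniform closed formula --- it is precisely the dichotomy ``$a \geqq 1$'' versus ``$a=0$'' that makes the bound $\geqq 0$ transparent, with the inequality $P^{\pi}(-1) \leqq P(-1)$ being strict exactly when all the spheres are even-dimensional (and $r \geqq 1$).
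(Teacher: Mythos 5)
Your proof is correct and follows essentially the same route as the paper: the paper likewise combines additivity of $P^{\pi}_{\bullet}$ and multiplicativity of $P_{\bullet}$ with the values (\ref{odd sphere}) and (\ref{even sphere}) to get $P^{\pi}(-1)\leqq 0 \leqq P(-1)$, and then transfers the inequality to $S$ by homotopy invariance. One small correction to your closing remark: the inequality is in fact strict in \emph{every} case, not only when all spheres are even-dimensional --- if $a\geqq 1$ odd-dimensional spheres occur, then $P^{\pi}(-1)=-a\leqq -1 < 0 = P(-1)$, so strictness fails for no product of spheres.
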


By the definition it is clear that $P^{\pi}_X(0)<P_X(0)$. Since $-1$ is symmetric to $1$ as to $0$, the above Corollary \ref{-1} is in fact a special case of the following proposition, which is a \emph{``mirror version"} of the Hilali conjecture:
\begin{pro}(\cite[Proposition 32.10]{FHT}) Let $X$ be a simply connected rationally elliptic space. Then 
$$\chi^{\pi}(X) \leqq 0 \quad \text{and} \quad 0\leqq \chi(X).$$
Namely we have $P^{\pi}_X(-1) \leqq P_X(-1)$ i.e., 
$$ \sum_{i\geqq 1} (-1)^i \dim \Bigl ( \pi_i(X)\otimes \Q \Bigr) \leqq \sum_{i\geqq 0} (-1)^i \dim H_i(X;\Q ).$$
\end{pro}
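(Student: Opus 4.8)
The plan is to convert the statement into rational homotopy theory via Sullivan's minimal model and then reduce it to a known fact about Koszul complexes over a polynomial ring. Let $(\Lambda V,d)$ be the minimal Sullivan model of $X$. Ellipticity of $X$ says that $V=V^{\mathrm{even}}\oplus V^{\mathrm{odd}}$ is finite-dimensional and that $H^*(\Lambda V,d)\cong H^*(X;\Q)$ is finite-dimensional; put $q=\dim V^{\mathrm{even}}$ and $p=\dim V^{\mathrm{odd}}$. Since $V^i\cong\mathrm{Hom}(\pi_i(X),\Q)$, we have $\beta^{\pi}_i(X)=\dim V^i$, hence $\chi^{\pi}(X)=q-p$, while $\chi(X)$ is the ordinary Euler characteristic of the finite-dimensional graded vector space $H^*(\Lambda V,d)$. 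So it suffices to prove $p\geq q$ and $\chi\bigl(H^*(\Lambda V,d)\bigr)\geq 0$.

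First I would pass to the associated pure model $(\Lambda V,d_\sigma)$, in which $d_\sigma$ vanishes on $V^{\mathrm{even}}$ and, on $V^{\mathrm{odd}}$, is $d$ followed by the projection of $\Lambda V$ onto $\Lambda V^{\mathrm{even}}$. The odd spectral sequence (see \cite[\S32]{FHT}) has $E_1$-term $H^*(\Lambda V,d_\sigma)$ and converges to $H^*(\Lambda V,d)$; from it one gets both that $(\Lambda V,d_\sigma)$ is again elliptic and that $\chi\bigl(H^*(\Lambda V,d)\bigr)=\chi\bigl(H^*(\Lambda V,d_\sigma)\bigr)$, because the Euler characteristic is constant along a spectral sequence with finite-dimensional pages. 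The point of $d_\sigma$ is that it is linear over $R:=\Lambda V^{\mathrm{even}}=\Q[x_1,\dots,x_q]$, so that, choosing an odd basis $y_1,\dots,y_p$ of $V^{\mathrm{odd}}$, the complex $(\Lambda V,d_\sigma)$ is exactly the Koszul complex $K(f_1,\dots,f_p;R)$ on the homogeneous polynomials $f_j:=d_\sigma y_j\in R$. A parity check — every monomial in $\Lambda^k\langle y_1,\dots,y_p\rangle\cdot R$ has cohomological degree $\equiv k\pmod 2$ — identifies $\chi\bigl(H^*(\Lambda V,d_\sigma)\bigr)$ with the Koszul Euler characteristic $\sum_k(-1)^k\dim_{\Q}H_k\bigl(K(f_1,\dots,f_p;R)\bigr)$.

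At this stage the problem is purely commutative-algebraic. The degree-zero Koszul homology is $R/(f_1,\dots,f_p)$, which is finite-dimensional since $(\Lambda V,d_\sigma)$ is elliptic; equivalently $(f_1,\dots,f_p)$ is $\mathfrak m$-primary in $R$, and Krull's height theorem then forces $p\geq q$, giving $\chi^{\pi}(X)=q-p\leq 0$. For the remaining inequality I would invoke the classical theorem of Serre and Auslander--Buchsbaum on Koszul Euler characteristics over a regular ring: for an $\mathfrak m$-primary ideal generated by $p$ elements in the regular ring $R$ of Krull dimension $q$, one has $\sum_k(-1)^k\dim_{\Q}H_k\bigl(K(f_1,\dots,f_p;R)\bigr)\geq 0$, with strict inequality precisely when $p=q$ (the complete-intersection case, where the value equals $\dim_{\Q}R/(f_1,\dots,f_p)$) and with value $0$ precisely when $p>q$. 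Hence $\chi(X)=\chi\bigl(H^*(\Lambda V,d_\sigma)\bigr)\geq 0$, and it vanishes exactly when $p>q$, i.e.\ exactly when $\chi^{\pi}(X)<0$.

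The main obstacle is the first reduction: showing that the pure model $(\Lambda V,d_\sigma)$ is elliptic and has the same Euler characteristic as $(\Lambda V,d)$. This is the only step that genuinely uses the ellipticity hypothesis on $X$, and it requires setting up the odd spectral sequence and controlling its convergence; it is carried out in \cite[\S32]{FHT}. Once that is granted, the rest is the Serre--Auslander--Buchsbaum vanishing theorem together with routine bookkeeping of signs, of the internal and homological gradings, and of the parity identity that makes the cohomological Euler characteristic of $(\Lambda V,d_\sigma)$ agree with its Koszul Euler characteristic.
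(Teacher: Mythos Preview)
The paper does not give its own proof of this proposition; it is quoted from \cite[Proposition~32.10]{FHT} as a known background result used only to put the preceding corollary in context, so there is nothing in the present paper to compare your argument against.

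That said, your sketch is correct and is, in outline, exactly the proof carried out in \cite[\S32]{FHT}: pass to the associated pure model via the odd spectral sequence (this is the step you flag as the main obstacle, and it is indeed the only place ellipticity is used non-trivially), identify the pure Sullivan algebra with the Koszul complex $K(f_1,\dots,f_p;R)$ over $R=\Q[x_1,\dots,x_q]$, deduce $p\geq q$ from Krull's height theorem since $(f_1,\dots,f_p)$ is $\mathfrak m$-primary, and read off $\chi(X)\geq 0$ from the Koszul Euler characteristic. One small remark: the last step is more elementary than your attribution to Serre and Auslander--Buchsbaum suggests. After an invertible $\Q$-linear change of the generators one may assume $f_1,\dots,f_q$ is a regular sequence, so $K(f_1,\dots,f_q;R)$ has homology $R/(f_1,\dots,f_q)$ concentrated in degree~$0$; the long exact sequence obtained by tensoring with each further $K(f_j;R)$ then shows that adjoining any additional generator to a Koszul complex with finite-dimensional homology forces the alternating sum to vanish. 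This same observation gives the dichotomy $\chi(X)>0\Leftrightarrow p=q\Leftrightarrow\chi^{\pi}(X)=0$ recorded in the Remark immediately after the proposition.
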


\begin{rem} It should be noted that in \cite[Proposition 32.10]{FHT} a bit stronger statement is also given; $\chi^{\pi}(X) =0 \Longleftrightarrow \chi(X)>0$, in other words $\chi^{\pi}(X) <0 \Longleftrightarrow  \chi(X)=0$. Indeed, for example, let us consider the above product of finitely many spheres $X:=S^{n_1} \times S^{n_2} \times \cdots \times S^{n_r}$, in which case $\chi^{\pi}(X) =0 \Longleftrightarrow \chi(X)>0$ follows from both (\ref{odd sphere}) and (\ref{even sphere}).
\end{rem}
\begin{rem} Finally we remark that in \cite{YY} we discuss the ratio $h(X):=\frac{P^{\pi}_X(1)}{P_X(1)}$ of a fibration $X$ of elliptic spaces. Note that if the Hilali conjecture holds, then $0 \leqq h(X) \leqq 1$.\\
\end{rem}
{\bf Acknowledgements:} The author would like to thank Toshihiro Yamaguchi for useful comments. This work is supported by JSPS KAKENHI Grant Numbers JP16H03936 and JP19K03468. \\

\end{document}